\theoremstyle{plain}
\newtheorem{theorem}{Theorem}[section]
\newtheorem{proposition}[theorem]{Proposition}
\newtheorem{corollary}[theorem]{Corollary}
\theoremstyle{definition}
\newcommand{\Pelc}[1]{\left(\Sigma X_n\right)_p}
\DeclareSymbolFont{bbold}{U}{bbold}{m}{n}
\DeclareSymbolFontAlphabet{\mathbbold}{bbold}
\DeclareMathOperator{\tr}{tr}
\DeclareMathOperator{\diag}{Diag}
\renewcommand{\le}{\leqslant}
\renewcommand{\ge}{\geqslant}
\begin{document}

\baselineskip 6.8mm

\title[Commutators of positive operators]{Commutators greater than a perturbation of the identity}

\author{R. Drnov\v sek}
\address{Faculty of Mathematics and Physics, University of Ljubljana,
  Jadranska 19, 1000 Ljubljana, Slovenia \ \ \ and \ \ \
  Institute of Mathematics, Physics, and Mechanics, Jadranska 19, 1000 Ljubljana, Slovenia}
\email{roman.drnovsek@fmf.uni-lj.si}

\author{M. Kandi\'c}
\address{Faculty of Mathematics and Physics, University of Ljubljana,
  Jadranska 19, 1000 Ljubljana, Slovenia \ \ \ and \ \ \
  Institute of Mathematics, Physics, and Mechanics, Jadranska 19, 1000 Ljubljana, Slovenia}
\email{marko.kandic@fmf.uni-lj.si}

\keywords{Banach lattices, positive operators, commutators, ordered normed algebras}
\subjclass[2010]{Primary: 46B42, 47B65, 47B47}

\date{\today}

\begin{abstract}
Let $a$ and $b$ be elements of an ordered normed algebra $\mathcal A$ with unit $e$. Suppose that the element $a$ is positive and that for some $\varepsilon>0$ there exists an element $x\in \mathcal A$ with $\|x\|\leq \varepsilon$ such that
$$ ab-ba \geq e+x . $$
 If the norm on $\mathcal A$ is monotone, then we show
$$ \|a\|\cdot \|b\|\geq \tfrac{1}{2} \ln \tfrac{1}{\varepsilon} , $$
which can be viewed as an order analog of Popa's quantitative result for commutators of operators on Hilbert spaces.

We also give a relevant example of positive operators $A$ and $B$ on the Hilbert lattice $\ell^2$ such that their commutator $A B - B A$  is greater than an arbitrarily small perturbation of the identity operator.
\end{abstract}

\maketitle

\section{Introduction and Preliminaries}

Let $\mathcal A$ be a normed algebra with unit $e$. Elements of the form $[a, b]:= a b - b a$ are called {\it commutators}.
As far as we know, the first significant result concerning the characterization of commutators of operators is due to Shoda \cite{Shoda} who proved that an $n\times n$ matrix over a field of zero characteristic is a commutator if and only if its trace is zero. Albert and Muckenhoupt \cite{Albert:57} extended Shoda's result to arbitrary fields. The first major contribution to normed algebras was due to Wintner  and Wielandt  (see e.g. \cite{Win47, Wie49, Hal82}). Their results can be considered as the Wintner-Wielandt theorem which states that the unit element $e$ cannot be expressed as the commutator of elements of $\mathcal A$.
By passing to the Calkin algebra, Halmos observed in \cite{Hal63} that the Wintner-Wielandt result immediately implies
that an operator which is of the form $\lambda I + K$ for some nonzero scalar $\lambda$ and a compact operator $K$ is not a commutator. Due to this fact he introduced \cite[Problem 5]{Hal63} whether every operator which is not of the form  $\lambda I + K$ for some nonzero scalar $\lambda$ and a compact operator $K$ is a commutator. For separable Hilbert spaces Brown and Pearcy \cite{BP64} provided a positive solution to Halmos' problem.  The complete characterization of commutators in
the C*-algebra $\mathcal B(\mathcal H)$ of all operators on a Hilbert space ${\mathcal H}$ is again due to Brown and Pearcy \cite{BP65}.


\begin{theorem} \label{BP}
Let $\mathcal H$ be an infinite-dimensional Hilbert space.
An operator $C \in \mathcal B (\mathcal H)$ is a commutator if and only if it is not of the form
$\lambda I + K$ for some nonzero scalar $\lambda$ and some operator $K$ belonging to the unique maximal ideal in $\mathcal B(\mathcal H)$.
\end{theorem}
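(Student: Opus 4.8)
The plan is to establish the two implications separately. Write $\mathcal J$ for the unique maximal closed two-sided ideal of $\mathcal B(\mathcal H)$ and $\pi\colon\mathcal B(\mathcal H)\to\mathcal B(\mathcal H)/\mathcal J$ for the quotient homomorphism onto the unital Banach algebra $\mathcal B(\mathcal H)/\mathcal J$ (a Calkin-type algebra, with unit $\pi(I)\neq 0$ since $I\notin\mathcal J$). The necessity direction is precisely the observation of Halmos recalled in the introduction: if $C=\lambda I+K$ with $\lambda\neq0$ and $K\in\mathcal J$, then $\pi(C)=\lambda\,\pi(I)$ is a nonzero scalar multiple of the unit of $\mathcal B(\mathcal H)/\mathcal J$, and by the Wintner--Wielandt theorem the unit of a unital normed algebra is not a commutator, hence neither is any nonzero scalar multiple of it (if $\lambda e=[a,b]$ then $e=[\lambda^{-1}a,b]$). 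Since $\pi$ carries commutators to commutators, $\pi(C)$ being a non-commutator forces $C$ to be a non-commutator in $\mathcal B(\mathcal H)$.

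The converse is the substantial direction: given $C$ that is \emph{not} of the form $\lambda I+K$ with $\lambda\neq0$ and $K\in\mathcal J$, one must produce $A,B\in\mathcal B(\mathcal H)$ with $C=AB-BA$. I would proceed in two stages. Stage one is to collect operators that are visibly commutators. Relative to a finite orthogonal splitting $\mathcal H=\mathcal H_1\oplus\cdots\oplus\mathcal H_n$ with all $\mathcal H_k\cong\mathcal H$, every operator whose diagonal blocks all vanish is a commutator: the choice $A=\operatorname{diag}(0,I,2I,\dots,(n-1)I)$ makes $[A,B]$ have vanishing diagonal blocks and $(i,j)$ block equal to $(i-j)B_{ij}$, so one simply sets $B_{ij}=C_{ij}/(i-j)$ off the diagonal and $B_{ii}=0$; for $n=2$ this is just $\left(\begin{smallmatrix}0&Y\\Z&0\end{smallmatrix}\right)=\left[P,\left(\begin{smallmatrix}0&-Y\\Z&0\end{smallmatrix}\right)\right]$ with $P$ the projection onto $\mathcal H_2$. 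The harder member of this stock is the infinite analogue together with ``telescoping'' operators along an infinite orthogonal decomposition of $\mathcal H$ into copies of itself: here a bounded diagonal $A$ no longer works, and one takes $A$ to be a shift along the decomposition and solves a first-order recursion for $B$, uniform boundedness of the partial sums of the diagonal data ensuring $B\in\mathcal B(\mathcal H)$. Stage two, the heart of the proof, is to show that any $C$ which is \emph{not} essentially a nonzero scalar can, after a unitary change of coordinates (and a bounded similarity where needed), be brought into one of these normal forms --- informally, that the essential spectrum of $C$ and its semi-Fredholm behaviour leave enough room to host an infinite staircase decomposition. I expect this reduction to be the main obstacle: it is genuinely delicate, it is where the infinite-dimensionality of $\mathcal H$ is indispensable, and in the non-separable case it must be carried out ideal-by-ideal, exploiting the linear ordering and structure of the closed ideals $\mathcal J_\kappa$ of $\mathcal B(\mathcal H)$ --- which is exactly why ``$\mathcal J$ is the unique maximal ideal'' is the right hypothesis. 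Granting the reduction, composing the two stages finishes the proof.

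Two sanity checks I would keep in view. First, the reduction in stage two must keep $C$ ``the same size'': one may not pass from $C$ to $C\oplus0$, since the set of commutators is not stable under such operations --- for instance $I$ is not a commutator whereas $I\oplus0$ is, and $I\oplus2I$ is a commutator although neither summand is. Second, the two degenerate cases fit consistently: the value $\lambda=0$ is permitted, so every element of $\mathcal J$ is asserted to be a commutator, which is handled by the same circle of ideas once $0$ lies in the essential spectrum; and in finite dimensions the statement genuinely fails and is replaced by Shoda's vanishing-trace criterion mentioned above.
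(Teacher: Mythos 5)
This statement is not proved in the paper at all: it is the Brown--Pearcy structure theorem, quoted with a citation to \cite{BP65}, so there is no ``paper proof'' to compare against beyond the remark in the introduction that the necessity direction is Halmos's observation via Wintner--Wielandt. Your treatment of that direction is correct and is exactly that observation: pass to the quotient by the unique maximal (closed, two-sided) ideal, note that a nonzero scalar multiple of the unit cannot be a commutator in a unital Banach algebra, and pull back along the quotient homomorphism.

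The problem is the converse, which is the entire substance of the theorem, and there your argument has a genuine gap that you yourself flag: everything rests on the assertion that any $C$ not of the form $\lambda I+K$ ($\lambda\neq 0$, $K$ in the maximal ideal) can be brought, by a unitary (or bounded similarity) change of coordinates, into one of your ``normal forms'' (vanishing block diagonal relative to a finite or infinite decomposition of $\mathcal H$ into copies of itself, or a telescoping form along such a decomposition). You write ``Granting the reduction, composing the two stages finishes the proof,'' but that reduction is precisely the hard content of Brown and Pearcy's work: it requires their structure theory (e.g.\ showing every such $C$ is similar to an operator with a specific $2\times 2$ block form over an infinite decomposition, and establishing the commutator status of the resulting blocks), and in the nonseparable case a careful case analysis along the chain of closed ideals of $\mathcal B(\mathcal H)$. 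None of this is supplied, so what you have is a correct proof of the easy implication plus an outline of a strategy for the hard one, not a proof. Your ``stock'' lemmas in stage one are fine as far as they go (the finite-block diagonal trick with $A=\diag(0,I,2I,\dots)$ is standard, and the infinite telescoping construction is the right tool), and your sanity checks (no passing to $C\oplus 0$; $\lambda=0$ allowed; failure in finite dimensions) show good awareness of the pitfalls --- but the bridge from ``$C$ is not essentially a nonzero scalar'' to the normal forms is missing, and it is the theorem.
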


For related results about commutators on Banach spaces we refer the reader to \cite{Ap72, Ap73, Dos09, DJ10} and references therein.

Motivated by the classical problem which operators are commutators, the authors of the paper \cite{Bracic:10}  initiated the study of positive commutators of positive operators on Banach lattices. The positivity of operators  $A$ and $B$ may lead to some restrictions on $[A, B]$.  If a positive operator $C$ on a Banach lattice can be written as a commutator of positive operators with one of them compact, then $C$ is necessarily quasinilpotent \cite{Drnovsek:11,Gao:14}.

There is yet another approach how one can study commutators. Popa \cite{Po82} gave the following quantitative version of the Wintner-Wielandt result.
He proved the following bound for the product of norms of operators whenever the commutator is close to the identity.

\begin{theorem} \label{Popa}
Let $\mathcal H$ be an infinite-dimensional Hilbert space. Let $A, B \in \mathcal B (\mathcal H)$ be such that
$$ \| [A, B] - I \| \leq \varepsilon $$
for some $\varepsilon > 0$. Then
$$ \| A \| \cdot \| B \| \geq \tfrac{1}{2} \ln \tfrac{1}{\varepsilon} . $$
\end{theorem}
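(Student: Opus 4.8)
The plan is to exploit that the inner derivation $\delta := [A,\,\cdot\,]$ acts on the powers of $B$ almost exactly the way differentiation acts on monomials, so that $[A,B]$ being close to $I$ forces $\|A\|\,\|B\|$ to be large --- a quantitative form of the obstruction behind the Wintner--Wielandt theorem. Write $C := [A,B]$ and $E := C-I$, so $\|E\|\le\varepsilon$, and set $P_n := B^n/n!$, so that $P_0 = I$. The first step is the Leibniz expansion for the derivation $\delta$,
\[
 \delta(B^n)=\sum_{j=0}^{n-1}B^j\,\delta(B)\,B^{n-1-j}=nB^{n-1}+\sum_{j=0}^{n-1}B^jE\,B^{n-1-j},
\]
which, after dividing by $n!$, reads $\delta(P_n)=P_{n-1}+R_n$ with $R_n:=\tfrac1{n!}\sum_{j=0}^{n-1}B^jEB^{n-1-j}$; using only the submultiplicative bound $\|B^k\|\le\|B\|^k$ one gets $\|R_n\|\le\varepsilon\,\|B\|^{n-1}/(n-1)!$ for every $n\ge1$.

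Next I would iterate $\delta(P_n)=P_{n-1}+R_n$ exactly $n$ times. Peeling off one error term at each stage, a routine induction gives the telescoping identity
\[
 \delta^n(P_n)=P_0+\sum_{i=0}^{n-1}\delta^{\,i}(R_{i+1})=I+\sum_{i=0}^{n-1}\delta^{\,i}(R_{i+1}).
\]
Now one estimates both sides with the crude bounds $\|\delta\|\le2\|A\|$ and $\|P_n\|\le\|B\|^n/n!$: the left side obeys $\|\delta^n(P_n)\|\le(2\|A\|\,\|B\|)^n/n!$, while each error obeys $\|\delta^{\,i}(R_{i+1})\|\le(2\|A\|)^i\varepsilon\,\|B\|^i/i!=\varepsilon(2\|A\|\,\|B\|)^i/i!$, so $\|\sum_{i=0}^{n-1}\delta^{\,i}(R_{i+1})\|\le\varepsilon\,e^{2\|A\|\,\|B\|}$, a bound \emph{independent of $n$}. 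Combining, for every $n\ge1$,
\[
 1=\|I\|\le\frac{(2\|A\|\,\|B\|)^n}{n!}+\varepsilon\,e^{2\|A\|\,\|B\|},
\]
and letting $n\to\infty$ kills the first term and leaves $1\le\varepsilon\,e^{2\|A\|\,\|B\|}$, i.e. $\|A\|\,\|B\|\ge\tfrac12\ln\tfrac1\varepsilon$. (If $\varepsilon\ge1$ there is nothing to prove.)

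The step I expect to be the main obstacle is the bookkeeping behind the telescoping identity: one must check that after $n$ applications of $\delta$ the scalar part $I$ survives with coefficient exactly $1$, while the accumulated $E$-contributions --- after the wasteful estimates $\|\delta\|\le2\|A\|$ and $\|B^k\|\le\|B\|^k$ --- still sum to a quantity uniformly bounded in $n$; it is the $1/i!$ factors inherited from $P_i=B^i/i!$ that make the series converge. The one structural feature worth isolating is that nowhere does the argument need a lower bound on $\|B^n\|$ or normality of $B$: only the one-sided inequality $\|B^k\|\le\|B\|^k$ is used. A similar mechanism --- the inner derivation behaving like differentiation on powers --- is what one would adapt, with norm estimates replaced by order inequalities wherever positivity allows, to obtain the ordered-algebra analogue announced in the abstract.
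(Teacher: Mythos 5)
Your argument is correct: the Leibniz expansion $\delta(B^n)=nB^{n-1}+\sum_{j}B^jEB^{n-1-j}$, the iterated identity $\delta^n(P_n)=I+\sum_{i=0}^{n-1}\delta^i(R_{i+1})$, and the estimates $\|\delta^n(P_n)\|\le (2\|A\|\|B\|)^n/n!$ and $\|\sum_i\delta^i(R_{i+1})\|\le\varepsilon e^{2\|A\|\|B\|}$ all check out, and letting $n\to\infty$ does give $1\le\varepsilon e^{2\|A\|\|B\|}$. Note, however, that the paper does not prove Theorem \ref{Popa} at all --- it is quoted from Popa's paper --- so the only in-paper argument to compare with is the proof of its order analog, Theorem \ref{quantitative_norm}. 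That proof shares your key first step (the same commutator expansion, there established by induction as an order inequality $[a^n,b]\ge na^{n-1}+a^{n-1}x+\cdots+xa^{n-1}$), but finishes differently: after passing to norms via normality it normalizes $\|b\|=\tfrac{1}{2\alpha}$, divides by $n!$, and sums a telescoping series in $\|a^n\|/n!$, whereas you keep $n$ fixed in the identity, apply the derivation $n$ times so that the identity element survives with coefficient one, and then discard the main term using $(2\|A\|\|B\|)^n/n!\to 0$. Your route avoids the normalization/rescaling step and any cancellation-of-series bookkeeping, at the cost of the slightly more delicate iterated-derivation identity; the paper's route transfers more readily to the ordered-algebra setting, since the single inequality $[a^n,b]\ge na^{n-1}+\cdots$ is exactly what positivity plus normality lets one exploit, while iterating $\delta$ would require keeping track of order inequalities under repeated commutation, which is not available there.
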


It follows from Theorem \ref{BP}  that $[A, B]$ can be made arbitrarily close to the identity $I$ in the operator norm for $A, B \in \mathcal B (\mathcal H)$.  In fact, we have the following result; see \cite[Proposition 0.2]{Tao19}.

\begin{theorem} \label{Popa_example}
Let $\mathcal H$ be an infinite-dimensional Hilbert space. Then,  for any $\varepsilon \in (0,1)$, there exist
operators $A, B \in \mathcal B (\mathcal H)$ such that
$$ \| [A, B] - I \| \leq \varepsilon $$
and
$$ \| A \| \cdot \| B \| = O(\varepsilon^{-2}) . $$
\end{theorem}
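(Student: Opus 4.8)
The plan is to construct $A$ and $B$ explicitly on $\mathcal H=\ell^2$, taking as inspiration the canonical commutation relation: on $\ell^2$ with standard basis $(e_n)_{n\ge0}$ the (unbounded) operators $a\,e_n=\sqrt n\,e_{n-1}$ and $a^*e_n=\sqrt{n+1}\,e_{n+1}$ satisfy $[a,a^*]=I$, and the task is to approximate this relation with bounded operators. Simply capping the weights at some level $N$ fails badly: it produces a \emph{diagonal} commutator whose diagonal entries $w_{n+1}^2-w_n^2$ telescope, so all of their partial sums are at most $\|A\|^2$; hence those entries cannot all lie within $\varepsilon<1$ of $1$, and $\|[A,B]-I\|$ stays bounded below by a constant no matter how smoothly one tapers the weights. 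This telescoping is the quantitative shadow of the trace obstruction behind the Wintner--Wielandt theorem, and it blocks every ``band-limited'' choice of $A$ and $B$.

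The way around it is to let the band width grow. Work with a block decomposition $\mathcal H=\bigoplus_{k\ge0}\mathbb C^{n_k}$ in which the block sizes $n_k$ grow geometrically, and take $A$ and $B$ to be block tridiagonal. Then the diagonal block $[A,B]_{kk}$ receives, besides $[A_{kk},B_{kk}]$, the cross contributions $A_{k,k\pm1}B_{k\pm1,k}$ and $B_{k,k\pm1}A_{k\pm1,k}$, and it is these that supply the ``missing trace'' $n_k=\tr(I_{n_k})$ that an honest commutator on a single block cannot carry. Putting $\tau_k=\tr(A_{k,k+1}B_{k+1,k})$ one still has $\tr[A,B]_{kk}=\tau_k-\tau_{k-1}$, but now only $|\tau_k|\le n_k\|A\|\,\|B\|$, so demanding $\tr[A,B]_{kk}\approx n_k$ is consistent exactly when $\sum_{j\le k}n_j\lesssim n_k\|A\|\,\|B\|$, i.e.\ when the blocks grow at rate $1+c\,(\|A\|\,\|B\|)^{-1}$; this identity is what ties the geometric ratio to the product of the norms.

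With the block sizes fixed this way, one then chooses the matrices $A_{kk},A_{k,k\pm1},B_{kk},B_{k,k\pm1}$ --- a finite-dimensional linear-algebra problem at each level $k$, to be solved uniformly in $k$ --- so that every diagonal block of $[A,B]$ equals $I_{n_k}$ up to operator-norm error $O(\varepsilon)$, every off-diagonal block has operator norm $O(\varepsilon)$, and $\|A\|,\|B\|=O(\varepsilon^{-1})$. Heuristically the error near the ``seam'' between consecutive blocks is governed by the relative jump in block size, about $(\|A\|\,\|B\|)^{-1}$ per step; spreading this jump across a whole block turns it into an operator-norm error of order $(\|A\|\,\|B\|)^{-1/2}$, and equating this with $\varepsilon$ gives $\|A\|\,\|B\|=O(\varepsilon^{-2})$.

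The step I expect to be the main obstacle is exactly this last one --- the operator-norm control of $[A,B]-I$. The trace bookkeeping is forced and easy; the real work is to show that the unavoidable error never \emph{concentrates}, since in operator norm a single bad direction is fatal. Concretely one must verify that the contributions from the diagonal blocks and from all the (distance-one and distance-two) off-diagonal blocks are simultaneously $O(\varepsilon)$, uniformly over the infinitely many, geometrically growing blocks; pinning those estimates down is what dictates the precise choice of the block matrices and, with it, the exponent $2$ --- certainly far from the $\tfrac12\ln\tfrac1\varepsilon$ of Theorem~\ref{Popa}, but more than enough for the statement at hand.
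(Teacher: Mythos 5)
First, a point of comparison: the paper does not prove \Cref{Popa_example} at all; it imports it from \cite[Proposition 0.2]{Tao19}, so your argument has to stand entirely on its own. As written it does not, because the one step that constitutes the theorem is missing. What you actually establish is negative and structural information: that a single weighted-shift (band-limited) ansatz is ruled out by the telescoping of the diagonal, and that for block-tridiagonal $A,B$ with respect to a decomposition $\mathcal H=\bigoplus_k\mathbb C^{n_k}$ the trace bookkeeping forces $\sum_{j\le k}n_j=O(n_k\|A\|\,\|B\|)$, hence geometric growth of the block sizes at rate roughly $1+c(\|A\|\,\|B\|)^{-1}$. These are necessary conditions that any construction must respect; they do not produce operators. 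The sentence ``one then chooses the matrices $A_{kk},A_{k,k\pm1},B_{kk},B_{k,k\pm1}$ so that every diagonal block of $[A,B]$ equals $I_{n_k}$ up to $O(\varepsilon)$, every off-diagonal block has norm $O(\varepsilon)$, and $\|A\|,\|B\|=O(\varepsilon^{-1})$'' is exactly the assertion of \Cref{Popa_example}, restated block by block, and no candidate blocks are written down, let alone estimated.

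The accompanying heuristic --- that a per-seam defect of relative size $(\|A\|\,\|B\|)^{-1}$ can be smeared over a block at operator-norm cost $(\|A\|\,\|B\|)^{-1/2}$ --- is plausible (it is the square-root gain familiar from weighted shifts, where a defect of total trace $n$ spread over $n$ coordinates costs about $\sqrt n$ in norm), but nothing in the proposal verifies it. To complete the argument you would need explicit blocks (for instance, truncated creation/annihilation-type pairs of dimension comparable to $\varepsilon^{-2}$, with couplings that hand each block's trace defect to the next), a check that the diagonal, distance-one and distance-two blocks of $[A,B]-I$ are $O(\varepsilon)$ simultaneously and uniformly in $k$, and the bounds $\|A\|,\|B\|=O(\varepsilon^{-1})$; note also that the exact trace identity involves both $\tr(A_{k,k+1}B_{k+1,k})$ and $\tr(B_{k,k+1}A_{k+1,k})$, though that only affects constants. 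You flag the norm control yourself as ``the main obstacle'', and it is: until those estimates are carried out, the proposal is a sensible plan of attack, consistent in spirit with the known constructions, rather than a proof of \Cref{Popa_example}.
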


Here we use the asymptotic notation $r = O(s)$ to denote an estimate of the form $|r| \leq  C \cdot s$  for an absolute constant $C$.

Tao \cite{Tao19} improved Theorem \ref{Popa_example} by obtaining a bound closer to that in Theorem \ref{Popa}.

\begin{theorem} \label{Tao}
Let $\mathcal H$ be an infinite-dimensional Hilbert space. Then,  for any $\varepsilon \in (0,1/2)$, there exist
operators $A, B \in \mathcal B (\mathcal H)$ such that
$$ \| [A, B] - I \| \leq \varepsilon $$
and
$$ \| A \| \cdot \| B \| = O( \ln^5 \tfrac{1}{\varepsilon}) . $$
\end{theorem}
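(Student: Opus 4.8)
The goal is to produce, for small $\varepsilon$, a pair $A,B$ with $\|[A,B]-I\|\le\varepsilon$ and $\|A\|\,\|B\|=O(\log^5\tfrac1\varepsilon)$. Since every infinite-dimensional Hilbert space $\mathcal H$ is unitarily equivalent to $\ell^2(\mathbb N)\otimes\mathcal K$ for some Hilbert space $\mathcal K$, and amplifying $A_0,B_0$ on $\ell^2(\mathbb N)$ to $A_0\otimes I$, $B_0\otimes I$ changes neither the norms nor $\|[A,B]-I\|$, it suffices to construct the operators on $\ell^2(\mathbb N)$; write $C=[A,B]$ and $E=C-I$. The guiding model is the canonical commutation relation $[a,a^\dagger]=I$ for the (unbounded) annihilation/creation operators: bounded ``quantisations'' of this relation on a finite phase-space box of side $R$, built with suitably smoothed cut-offs in position and in frequency, give $[A,B]=I+E$ with $\|E\|$ of order $1/R$ and $\|A\|\,\|B\|$ of order $R^2$, the defect $E$ being concentrated near the box boundary and controlled by the width over which the cut-offs vary. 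Balancing the two sides gives $R\sim1/\varepsilon$ and the bound $\|A\|\,\|B\|=O(\varepsilon^{-2})$ of Theorem~\ref{Popa_example}; a single box cannot do essentially better, since the uncertainty principle forces the boundary layer to be wide enough that the defect is at least $\sim R^{-1}$ when only $\sim R^2$ states are available. So the polynomial bound must be broken by a different mechanism.

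The plan is to break it by \emph{iterating an amplification step}. The step starts from $A_0,B_0$ on $\mathcal H_0$ with $[A_0,B_0]=I+E_0$, $\|E_0\|\le\delta$, and (harmlessly) $\|A_0\|,\|B_0\|\ge1$, and produces $A_1,B_1$ on $\mathcal H_0\otimes\ell^2(\mathbb N)$ of the form: a scalar rescaling of the block-diagonal pair $A_0\otimes I,\;B_0\otimes I$, plus coupling terms that are tridiagonal in the $\ell^2(\mathbb N)$ factor, i.e.\ that link only consecutive copies of $\mathcal H_0$. These couplings are designed so that the first-order defect $E_0$ of the $k$-th copy is \emph{telescoped}: it is cancelled by the coupling into the $(k+1)$-st copy, whose own leftover is absorbed by the $(k+2)$-nd copy, and so on down the chain. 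Only a second-order defect should survive, $[A_1,B_1]=I+E_1$ with $\|E_1\|=O(\delta^2)$, while the coupling operators can be kept of norm $O(\sqrt\delta)$, negligible against $\|A_0\|,\|B_0\|\ge1$; hence $\|A_1\|\,\|B_1\|\le C\,\|A_0\|\,\|B_0\|$ for an absolute constant $C$. Starting from a base construction of fixed quality — take, say, $\delta$ a suitable absolute constant in Theorem~\ref{Popa_example}, so $\|A_0\|\,\|B_0\|=O(1)$ — and applying the step $m$ times gives error at most $2^{-2^{m}}$ and norm product at most $C^{m}\cdot O(1)$. Taking $m\asymp\log\log\tfrac1\varepsilon$ forces the error below $\varepsilon$ while keeping $\|A_m\|\,\|B_m\|=C^{O(\log\log(1/\varepsilon))}=(\log\tfrac1\varepsilon)^{O(1)}$; bookkeeping of the constant $C$, together with the logarithmic losses mentioned below, yields the exponent $5$. (Equivalently, one may unfold the recursion into a single hierarchical construction with $\asymp\log\log\tfrac1\varepsilon$ nested scales.)

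The crux — and the step I expect to be hardest — is justifying the amplification, precisely the claim that the telescoping leaves only an $O(\delta^2)$ defect with $O(\sqrt\delta)$-size couplings (the boundedness of the resulting tridiagonal-block operators is routine). The cancellations are not algebraically exact: each one leaves a commutator-type remainder built from a coupling operator and $E_0$, and there is one such remainder for every one of the infinitely many copies. One therefore has to show that the operator transporting the still-uncancelled part of the defect from copy $k$ to copy $k+1$ is uniformly contractive, so that the chain converges and the residual ``runs off to infinity'' carrying no weight. This is a transfer-matrix / stability estimate; it is what pins down how large the inter-copy couplings and the rescaling of $A_0,B_0$ must be, and it is the source of the polylogarithmic (rather than logarithmic) final bound. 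By Popa's Theorem~\ref{Popa} one has $\|A\|\,\|B\|\ge\tfrac12\log\tfrac1\varepsilon$ unconditionally, so the target is in any case only to match the optimal bound up to a polylogarithmic factor.
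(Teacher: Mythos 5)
This theorem is not proved in the paper at all: it is quoted verbatim from Tao \cite{Tao19}, so the only benchmark is Tao's own (quite involved, explicit, multi-scale) construction, and against any benchmark your text is a strategy outline rather than a proof. The load-bearing step --- the amplification lemma asserting that from $[A_0,B_0]=I+E_0$ with $\|E_0\|\le\delta$ one can build $A_1,B_1$ on $\mathcal H_0\otimes\ell^2(\mathbb N)$ with $[A_1,B_1]=I+E_1$, $\|E_1\|=O(\delta^2)$, using tridiagonal couplings of norm $O(\sqrt\delta)$ so that $\|A_1\|\,\|B_1\|\le C\,\|A_0\|\,\|B_0\|$ for an absolute constant $C$ --- is never constructed, and the ``transfer-matrix / stability estimate'' that is supposed to make the infinite telescoping of defects converge is only named, not proved. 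You yourself flag this as the crux you expect to be hardest; but it is precisely the entire mathematical content of the statement (everything else in your write-up --- the reduction to $\ell^2(\mathbb N)$, the choice of a base pair from \Cref{Popa_example}, the count $m\asymp\log\log\tfrac1\varepsilon$ of iterations --- is routine). A proposal whose central lemma is conjectured rather than established has a genuine gap.

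There is also a quantitative problem with the claimed conclusion even granting the scheme's shape. If each step truly costs only an absolute constant factor $C$ in the norm product, the iteration gives $\|A_m\|\,\|B_m\|=O(C^m)=O\bigl((\ln\tfrac1\varepsilon)^{\log_2 C}\bigr)$, and to land on the exponent $5$ of \Cref{Tao} you would have to pin down $C$, which you do not. If instead each step loses a factor growing like $\ln\tfrac1{\delta_k}$ (your own aside about ``logarithmic losses'' suggests exactly this), then with $\delta_k\approx\delta_0^{2^k}$ the accumulated loss is $\prod_{k\le m}2^k\approx 2^{m^2/2}=(\ln\tfrac1\varepsilon)^{\Theta(\log\log\frac1\varepsilon)}$, which is \emph{not} $O(\ln^5\tfrac1\varepsilon)$; so the ``bookkeeping yields the exponent $5$'' sentence is not a formality but the difference between proving the stated theorem and proving a strictly weaker one. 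To repair this you would either have to carry out the amplification with explicit constants, or abandon the generic squaring scheme and follow an explicit construction such as the one in \cite{Tao19}, where the $\ln^5$ arises from concrete estimates on concretely defined operators.
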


In this paper we prove order analogs of Theorems \ref{Popa} and \ref{Popa_example} in the context of unital ordered normed algebras that are a generalization of ordered Banach algebras of operators on Banach lattices.
We also give a relevant example of positive operators $A$ and $B$ on the Hilbert lattice $\ell^2$ such that their commutator $A B - B A$  is greater than an arbitrarily small perturbation of the identity operator.
By a {\it Hilbert lattice} we understand a Banach lattice whose underlying Banach space is a Hilbert space.

We end this section by recalling some definitions about ordered normed algebras.
Let $\mathcal A$ be a real or complex normed algebra with unit $e$. We
call a nonempty subset $\mathcal A^+$  of $\mathcal A$ a {\it cone} if
$\mathcal A^+ + \mathcal A^+ \subseteq \mathcal A^+$, $\lambda \mathcal A^+ \subseteq \mathcal A^+$  for all $\lambda \geq 0$ and
$\mathcal A^+ \cap ( - \mathcal A^+) = \{0\}$.
Any cone  $\mathcal A^+$ on $\mathcal A$ induces a partial ordering on $\mathcal A$ in the
following way:
$$ a \leq b \ \textrm{ if and only if } \ b - a \in \mathcal A^+, \ (a, b \in \mathcal A) . $$
Clearly, $\mathcal A^+ = \{a \in  \mathcal A : a \geq 0\}$, and therefore we call the elements of  $\mathcal A^+$ {\it positive}, and the elements of  $- \mathcal A^+$ {\it negative}.
A cone $\mathcal A^+$ of a normed algebra $\mathcal A$ with unit $e$  is called an {\it algebra cone} if
$\mathcal A^+ \cdot  \mathcal A^+ \subseteq \mathcal A^+$ and $e \in \mathcal A^+$.
In this case $\mathcal A$ is called an {\it ordered normed algebra}.
A cone $\mathcal A^+$ of $\mathcal A$ is {\it normal} if there exists a positive constant $\alpha$ (necessarily at least $1$) such that
$0 \le a \le b$ implies $\|a\| \le \alpha \|b\|$.  If we can take $\alpha = 1$, then we say that the norm on $\mathcal A$ is {\it monotone}. \\

\section{Commutators in ordered normed algebras}

Let $C \geq I$ be any operator on the Hilbert lattice $\ell^2$ which is
is not of the form $\lambda I + K$ for some scalar $\lambda$ and some compact operator $K$.
Then by \Cref{BP} there exist operators $A$ and $B$ in $\mathcal B (\ell^2)$ such that $[A, B] = C \geq I$.
One may ask whether $A$ and $B$ can be also positive operators on the Hilbert lattice $\ell^2$.
The answer to this question is negative, as we have the following theorem that is inspired by Wielandt's proof of the Wintner-Wielandt result (see e.g. \cite{Hal82}).

\begin{theorem}\label{Wielandt-type}
Let $a$ and $b$ be elements of a unital ordered normed algebra $\mathcal A$ with a normal cone. If $[a,b]\geq e$, then none of $a$ and $b$ is neither positive nor negative.
\end{theorem}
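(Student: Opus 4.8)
The plan is to collapse the four things to be proved---that neither of \(a,b\) can be positive and neither can be negative---into a single claim, and then to prove that claim by Wielandt's power trick, with the scalar equality \(ab^n-b^na=n\,b^{n-1}\) replaced throughout by an order inequality. Concretely, I would isolate the following statement: in a nontrivial unital ordered normed algebra \(\mathcal A\) with normal cone there are no \(u,v\in\mathcal A\) with \(v\geq 0\) and \(uv-vu\geq e\). Granting this, the theorem follows by four substitutions, all using \(uv-vu=ab-ba\geq e\): if \(b\geq 0\) take \((u,v)=(a,b)\); if \(b\leq 0\) take \((u,v)=(-a,-b)\); if \(a\geq 0\) take \((u,v)=(-b,a)\); if \(a\leq 0\) take \((u,v)=(b,-a)\). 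In each case \(v\geq 0\), so each of the four possibilities contradicts the claim.

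To prove the claim, the engine is the purely algebraic identity
\[
uv^{n+1}-v^{n+1}u=(uv^n-v^nu)\,v+v^n(uv-vu),
\]
together with \(v\geq 0\Rightarrow v^n\geq 0\) for all \(n\geq 0\) (from \(e\in\mathcal A^+\) and \(\mathcal A^+\cdot\mathcal A^+\subseteq\mathcal A^+\)). By induction on \(n\) I would show \(uv^n-v^nu\geq n\,v^{n-1}\geq 0\) for every \(n\geq 1\): the base case \(n=1\) is the hypothesis, and for the step one right-multiplies the inequality \(uv^n-v^nu-n\,v^{n-1}\geq 0\) by \(v\geq 0\) and left-multiplies \(uv-vu-e\geq 0\) by \(v^n\geq 0\), then adds, obtaining \(uv^{n+1}-v^{n+1}u\geq n\,v^n+v^n=(n+1)v^n\). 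Now normality with constant \(\alpha\geq 1\) and submultiplicativity give
\[
n\,\|v^{n-1}\|\leq\alpha\,\|uv^n-v^nu\|\leq 2\alpha\,\|u\|\,\|v\|\,\|v^{n-1}\| .
\]
If \(v^{n-1}\neq 0\) for all \(n\), this forces \(n\leq 2\alpha\|u\|\|v\|\) for all \(n\), absurd; so \(v^m=0\) for some minimal \(m\geq 1\). If \(m=1\) then \(v=0\), whence \(0=uv-vu\geq e\); combined with \(e\geq 0\) this gives \(e\in\mathcal A^+\cap(-\mathcal A^+)=\{0\}\), i.e. \(\mathcal A\) is trivial, which we exclude. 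If \(m\geq 2\), plug \(n=m\) into the inductive inequality: \(0=uv^m-v^mu\geq m\,v^{m-1}\), so \(v^{m-1}\in\mathcal A^+\) (shown above) and also \(v^{m-1}\in-\mathcal A^+\), hence \(v^{m-1}=0\), contradicting minimality of \(m\).

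I do not expect a genuine obstacle here; the only things needing care are bookkeeping: verifying the commutator identity, and---this is the one place the argument could silently break---getting the inductive inequality on the correct side, since the cone inclusion \(\mathcal A^+\cdot\mathcal A^+\subseteq\mathcal A^+\) only lets us multiply \emph{positive} elements together (so \(uv^n-v^nu-nv^{n-1}\geq 0\) must be multiplied by \(v\) \emph{on the right} and \(uv-vu-e\geq 0\) by \(v^n\) \emph{on the left}). Monotonicity of the norm (more precisely normality) is what lets us pass from \(0\leq n v^{n-1}\leq uv^n-v^nu\) to the numerical estimate, exactly as in Wielandt's proof; the degenerate case \(v=0\) is the only extra detail relative to the unital/commutator-equals-\(e\) situation.
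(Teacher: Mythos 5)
Your proposal is correct and follows essentially the same route as the paper: Wielandt's power trick upgraded to the order setting via the induction $uv^n-v^nu\geq n\,v^{n-1}$, normality plus submultiplicativity to get $n\leq 2\alpha\|u\|\|v\|$, and a separate treatment of the degenerate nilpotent case, with the four sign cases reduced by the same commutator symmetries the paper uses. The only (cosmetic) difference is that you place the positive element in the second slot of the commutator, so your induction multiplies by $v$ on the right where the paper multiplies by $a$ on the left.
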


\begin{proof}
Suppose first that $a$ is positive. We will prove by induction that for each $n\in \mathbb N$ we have $[a^n,b]\geq n a^{n-1}$.
For $n=1$, the inequality is clear by the hypothesis. Suppose that $[a^n,b]\geq n a^{n-1}$ for some $n\in \mathbb N$.
Then one can write
\begin{align*}
[a^{n+1},b]&=a^{n+1}b-ba^{n+1}=a(a^nb-ba^n)+(ab-ba)a^n\\
& = a[a^n,b]+[a,b]a^n.
\end{align*}
Since the cone $\mathcal A^+$ is an algebra cone, we have
$$ [a^{n+1},b] \geq a\cdot n a^{n-1}+e\cdot a^n=(n+1)a^n \geq 0 , $$
 which concludes the induction step.

The normality of the cone $\mathcal A^+$ yields the existence of a constant $\alpha \ge 1$ such that $0\leq x\leq y$ implies $\|x\|\leq \alpha \|y\|$. Hence, from the inequality $0\leq na^{n-1}\leq [a^n,b]$ we conclude that
$n\|a^{n-1}\|\leq \alpha \|[a^n,b]\|$, so that the submultiplicativity of the norm gives us
$$n \|a^{n-1}\| \leq \alpha \|[a^{n},b]\| = \alpha \|a^{n}b-ba^{n}\|\leq 2\alpha \|a\|\,\|b\|\,\|a^{n-1}\|.$$
If $a^k=0$ for some $k\in \mathbb N$, then the inequality $0 = [a^k,b]\geq ka^{k-1}\geq 0$ would yield $a^{k-1}=0$. Repeating the same argument, we obtain $e=a^0=0$, which is impossible. Hence, $a^n\neq 0$ for any $n\in \mathbb N$, so that for each $n\in \mathbb N$ we obtain
$n\leq 2\alpha\|a\|\,\|b\|$. This contradiction shows that $a$ is not positive.

If $b$ is positive, we first rewrite $[a,b]=[b,-a]$ and apply the first part of the proof to conclude that the assumption $b\geq 0$ is absurd. If $a$ is negative, we rewrite $[a,b]=[-a,-b]$ and use the same argument above. We similarly treat the case when $b$ is negative.
\end{proof}

\begin{corollary}\label{solvabilityInequality}
Let  $\mathcal A$ be a unital ordered normed algebra with a normal algebra cone  $\mathcal A^+$.
Then the inequality $[a,b]\geq e$ is not solvable for $a$ and $b$ in $\mathcal A$ with one of them positive.
\end{corollary}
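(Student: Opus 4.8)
The plan is to deduce the corollary directly from \Cref{Wielandt-type}; no new computation is needed, so I would simply argue by contradiction. Suppose that there exist $a,b\in\mathcal A$ with $[a,b]\geq e$ and with at least one of $a$, $b$ positive. If $a\geq 0$, then \Cref{Wielandt-type}, applied to this very pair $(a,b)$, asserts that $a$ cannot be positive — a contradiction. If instead $b\geq 0$, the same theorem gives that $b$ cannot be positive, again a contradiction. Hence the inequality $[a,b]\geq e$ is not solvable under the stated restriction.

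The one point worth spelling out is that the hypotheses match. A unital ordered normed algebra carries, by definition, an algebra cone — one that is closed under products and contains $e$ — so demanding in the corollary that $\mathcal A^+$ be a \emph{normal algebra cone} is precisely the setting of \Cref{Wielandt-type} (a unital ordered normed algebra with normal cone). In particular the inductive inequality $[a^n,b]\geq na^{n-1}$ driving the proof of \Cref{Wielandt-type} genuinely uses $\mathcal A^+\cdot\mathcal A^+\subseteq\mathcal A^+$, which is available here. Viewed this way, the corollary is the order counterpart of the Wintner--Wielandt theorem: not only is $e$ never \emph{equal} to a commutator of positive-involved elements, it is never even \emph{dominated} by one.

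There is no real obstacle; the statement is isolated from \Cref{Wielandt-type} only for emphasis and for ease of reference in the quantitative refinements of the next section.
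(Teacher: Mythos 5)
Your proposal is correct and matches the paper's intent exactly: the corollary is stated without a separate proof precisely because it is an immediate restatement of \Cref{Wielandt-type}, and your contradiction argument (together with the observation that the cone of an ordered normed algebra is by definition an algebra cone, so the hypotheses align) is all that is needed.
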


\Cref{solvabilityInequality} enables us to prove that in the case of a closed and normal algebra cone a commutator of two elements with one of them being positive cannot dominate a perturbation of the unit element whenever the perturbation is too small in norm.

\begin{corollary}\label{perturbationClosed}
\label{delta}
Let  $\mathcal A$ be a unital ordered normed algebra with a normal and closed algebra cone $\mathcal A^+$.
Let $a$ and $b$ be elements of $\mathcal A$ with one of them positive.
Then there exists a positive real number $\delta>0$ such that there is no element $x \in \mathcal A$ with $\|x\|<\delta$ and
$[a,b]\geq e+x$.
\end{corollary}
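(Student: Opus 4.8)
The plan is to argue by contradiction using the closedness of the cone to pass from an approximate inequality to the exact inequality $[a,b]\ge e$, which is forbidden by \Cref{solvabilityInequality}. Suppose, to the contrary, that no such $\delta>0$ exists. Then for every $n\in\mathbb N$ the choice $\delta=1/n$ fails, so there is an element $x_n\in\mathcal A$ with $\|x_n\|<1/n$ and $[a,b]\ge e+x_n$; equivalently, $[a,b]-e-x_n\in\mathcal A^+$ for each $n$.

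Next I would let $n\to\infty$. Since $\|x_n\|<1/n\to 0$, the sequence $x_n$ converges to $0$ in norm, hence the sequence $[a,b]-e-x_n$ converges in norm to $[a,b]-e$. Because $\mathcal A^+$ is assumed closed, the limit $[a,b]-e$ lies in $\mathcal A^+$, i.e. $[a,b]\ge e$. As one of $a,b$ is positive, this contradicts \Cref{solvabilityInequality}, and the proof is complete.

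There is essentially no serious obstacle here: the statement is a soft consequence of \Cref{solvabilityInequality} together with the topological hypothesis on the cone, and the only point to be careful about is that closedness of $\mathcal A^+$ is genuinely needed (it is what converts ``$[a,b]$ dominates something arbitrarily close to $e$'' into ``$[a,b]$ dominates $e$''). One could alternatively phrase the same argument without sequences, directly invoking that the complement of $\mathcal A^+$ is open around $[a,b]-e$, but the sequential formulation is the cleanest to write.
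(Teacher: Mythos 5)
Your argument is correct and is essentially identical to the paper's own proof: both proceed by contradiction, extract a sequence $x_n$ with $\|x_n\|<1/n$ and $[a,b]\ge e+x_n$, and use closedness of $\mathcal A^+$ to pass to the limit $[a,b]\ge e$, contradicting \Cref{solvabilityInequality}. Nothing further is needed.
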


\begin{proof}
Suppose that the conclusion of the corollary does not hold. Then for each $n\in \mathbb N$ there exists an element $x_n\in \mathcal A$ such that $\|x_n\|<\tfrac 1n$ and
$[a,b]\geq e+x_n$.  Since the cone $\mathcal A^+$ is closed, we obtain $[a,b]\geq e$ which clearly contradicts \Cref{solvabilityInequality}.
\end{proof}

The natural question that arises here is the necessity of closeness condition of the cone in \Cref{delta}. Although at the first glance the answer is not obvious, it turns out that the closeness condition is redundant once we prove the following order analog of Popa's quantitative version (\Cref{Popa}) of the Wintner-Wielandt result.


\begin{theorem}\label{quantitative_norm}
Let $a$ and $b$ be elements of an ordered normed algebra $\mathcal A$ with unit $e$. Suppose that at least one of the elements $a$ and $b$ is positive, and that for some $\varepsilon>0$ there exists an element $x\in \mathcal A$ with $\|x\|\leq \varepsilon$ such that
$$[a,b]\geq e+x.$$
If the cone $\mathcal A^+$ is normal with normality constant $\alpha$, then
$$\|a\|\cdot \|b\|\geq \tfrac{1}{2\alpha} \ln \tfrac{1}{\alpha \varepsilon}.$$
In particular, if the norm on $\mathcal A$ is monotone, then
$$ \|a\|\cdot \|b\|\geq \tfrac{1}{2} \ln \tfrac{1}{\varepsilon} . $$
\end{theorem}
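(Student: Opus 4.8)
The plan is to reduce to the case that $a$ is positive, to iterate the commutator inequality exactly as in the proof of \Cref{Wielandt-type}, and then to turn the resulting family of scalar inequalities into the desired bound by means of an exponential generating function. First I would clear away the degenerate cases: if $a=0$ or $b=0$ then $[a,b]=0$ forces $0\le e\le -x$, whence $1\le\|e\|\le\alpha\|x\|\le\alpha\varepsilon$ and the asserted inequality is vacuous. Next, if it is $b$ (rather than $a$) that is positive, I would replace the pair $(a,b)$ by $(b,-a)$, using $[b,-a]=[a,b]\ge e+x$; so from now on one may assume $a\ge 0$ and $\|a\|,\|b\|>0$.

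As in \Cref{Wielandt-type}, an induction on $n$ — built on $[a^{n+1},b]=a[a^n,b]+[a,b]a^n$ together with $a\ge 0$ — gives $[a^n,b]\ge n a^{n-1}+w_n$ with $w_n:=\sum_{k=0}^{n-1}a^k x a^{n-1-k}$ (equivalently $w_{n+1}=aw_n+xa^n$), so that $\|w_n\|\le n\|a\|^{n-1}\varepsilon$. From $0\le n a^{n-1}\le [a^n,b]-w_n$ and normality of the cone, $n\|a^{n-1}\|\le\alpha\|[a^n,b]-w_n\|\le\alpha\bigl(2\|a^n\|\,\|b\|+n\|a\|^{n-1}\varepsilon\bigr)$. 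Writing $r_k:=\|a^k\|/\|a\|^k$
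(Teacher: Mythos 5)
Your setup is sound and follows the same road as the paper: reduce to $a\ge 0$ via $[b,-a]=[a,b]$, prove by induction that $[a^n,b]\ge na^{n-1}+\sum_{k=0}^{n-1}a^kxa^{n-1-k}$, and use normality plus submultiplicativity to get $n\|a^{n-1}\|\le\alpha\bigl(2\|a^n\|\,\|b\|+n\varepsilon\|a\|^{n-1}\bigr)$; the handling of the degenerate case $a=0$ or $b=0$ (which forces $\alpha\varepsilon\ge\|e\|\ge1$, making the bound vacuous) is a correct, if minor, addition. But the proposal stops exactly where the real work begins: after ``Writing $r_k:=\|a^k\|/\|a\|^k$'' there is no argument at all, so the step that converts the family of scalar inequalities into the logarithmic lower bound --- the only genuinely quantitative part of the theorem --- is missing. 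Announcing ``an exponential generating function'' in the plan is not a proof; as it stands, nothing in the text produces the factor $\ln\tfrac1{\alpha\varepsilon}$.

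For comparison, the paper closes this gap as follows: first normalize so that $\|b\|=\tfrac1{2\alpha}$, so the key inequality becomes $n\|a^{n-1}\|\le\|a^n\|+\alpha\varepsilon n\|a\|^{n-1}$; divide by $n!$ and sum over $n\ge1$. Both series $\sum_n\|a^n\|/n!$ and $\sum_n\|a\|^{n-1}/(n-1)!$ converge (they are dominated by $e^{\|a\|}$), so after cancellation only the $n=1$ term $\|e\|\ge1$ survives on the left, yielding $1\le\alpha\varepsilon e^{\|a\|}$, i.e.\ $\|a\|\ge\ln\tfrac1{\alpha\varepsilon}$; the general case follows by replacing $(a,b)$ with $\bigl(2\alpha\|b\|a,\tfrac1{2\alpha\|b\|}b\bigr)$. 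Your normalized quantities $r_k$ can be made to work in essentially the same way without rescaling $a$ and $b$: dividing the key inequality by $\|a\|^{n-1}$ gives $nr_{n-1}\le c\,r_n+\alpha\varepsilon n$ with $c:=2\alpha\|a\|\,\|b\|$, and multiplying by $c^{n-1}/n!$ and summing (legitimate since $r_m\le\|e\|$, so $\sum_m c^mr_m/m!$ converges) telescopes to $\|e\|\le\alpha\varepsilon e^{c}$, i.e.\ $2\alpha\|a\|\,\|b\|\ge\ln\tfrac1{\alpha\varepsilon}$. Either way, this convergence-and-cancellation argument must actually be written out; without it the proof is incomplete.
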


\begin{proof}
Since $[a,b]=[b,-a]$, we may assume that the element $a$ is positive.
Let us first prove by induction that for each $n\in \mathbb N$ we have
\begin{equation}\label{komutatorska_formula}
[a^n,b]\geq n a^{n-1}+a^{n-1}x + a^{n-2}xa+\cdots +xa^{n-1}.
\end{equation}
For $n=1$ the inequality above reads as
$[a,b]\geq e+x$ which holds by the assumption. Suppose that the inequality above holds for some $n\in \mathbb N$. Then we have
\begin{align*}
[a^{n+1},b]&=a^{n+1}b-ba^{n+1}=a(a^nb-ba^n)+(ab-ba)a^n=a[a^n,b]+[a,b]a^n\\
& \geq a\left(n a^{n-1}+a^{n-1}x + a^{n-2}xa + \cdots + xa^{n-1}\right)+(e+x)a^n\\
& = (n+1)a^n+a^nx+a^{n-1}xa +\cdots+axa^{n-1}+xa^n
\end{align*}
which completes the induction step.

If we rewrite inequality \eqref{komutatorska_formula} as
$$0\leq n a^{n-1}\leq [a^n,b]-a^{n-1}x - a^{n-2}xa-\cdots -xa^{n-1}$$
and use the fact that the cone of $\mathcal A$ is normal with $\alpha$ its normality constant, we obtain
$$n\|a^{n-1}\|\leq \alpha (\|[a^n,b]\|+n\varepsilon \|a\|^{n-1}).$$
Since the norm of the commutator $[a^n,b]$ can be estimated as
$$\|[a^n,b]\|\leq \|a^nb\|+\|ba^n\|\leq 2\|a^n\|\|b\| , $$
we obtain
$$n\|a^{n-1}\|\leq 2\alpha\|a^n\|\|b\|+\alpha\varepsilon n\|a\|^{n-1}.$$

Assume first that $\|b\|=\tfrac{1}{2\alpha}$. Dividing by $n!$ and taking sums where $n$ goes from one to infinity we have
$$\sum_{n=1}^\infty \frac{\|a^{n-1}\|}{(n-1)!}\leq \sum_{n=1}^\infty\frac{\|a^n\|}{n!} + \alpha\varepsilon \sum_{n=1}^\infty \frac{\|a\|^{n-1}}{(n-1)!}.$$
Since all terms on the left-hand side of the inequality cancel except the first one, we have
$$1\leq \alpha \varepsilon \sum_{n=1}^\infty \frac{\|a\|^{n-1}}{(n-1)!}=\alpha \varepsilon e^{\|a\|}$$
and so $\|a\|\geq \ln \tfrac{1}{\alpha \varepsilon}$.

For the general case, replace $a$ and $b$ by $\widetilde a:=2\alpha \|b\|a$ and $\widetilde b:=\tfrac{1}{2\alpha\|b\|} b$, respectively. Since
$$[\widetilde a,\widetilde b]=[a,b]\geq e+x,$$ the first part of the proof yields $2\alpha\|a\|\|b\|=\|\widetilde a\|\geq \ln \tfrac{1}{\alpha\varepsilon}$ which completes the proof.
\end{proof}

As it was already announced, the following corollary is a significant improvement of \Cref{perturbationClosed} to all ordered normed algebras with normal algebra cones.

\begin{corollary}
\label{delta1}
Let  $\mathcal A$ be a unital ordered normed algebra with a normal algebra cone $\mathcal A^+$.
Let $a$ and $b$ be elements of $\mathcal A$ with one of them positive.
Then for each
$$0<\delta<\tfrac{1}{\alpha}e^{-2\alpha\|a\|\,\|b\|}$$ there is no element $x \in \mathcal A$ with $\|x\|<\delta$ and
$[a,b]\geq e+x$.
\end{corollary}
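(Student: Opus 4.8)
The plan is to derive this immediately from \Cref{quantitative_norm} by contradiction. Suppose, toward a contradiction, that for some $\delta$ with $0 < \delta < \tfrac{1}{\alpha} e^{-2\alpha\|a\|\,\|b\|}$ there nevertheless exists an element $x \in \mathcal A$ with $\|x\| < \delta$ satisfying $[a,b] \geq e + x$. Since one of $a$ and $b$ is positive and $\|x\| \leq \delta$, I would apply \Cref{quantitative_norm} with $\varepsilon := \delta$; this is legitimate because the algebra cone is in particular a normal cone with normality constant $\alpha \geq 1$. The theorem then yields
$$\|a\| \cdot \|b\| \geq \tfrac{1}{2\alpha} \ln \tfrac{1}{\alpha\delta}.$$

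The remaining step is a routine rearrangement. Multiplying by $2\alpha$ gives $2\alpha\|a\|\,\|b\| \geq \ln\tfrac{1}{\alpha\delta}$, and exponentiating both sides (the right-hand side being positive, since $\delta < \tfrac{1}{\alpha}e^{-2\alpha\|a\|\,\|b\|} \leq \tfrac{1}{\alpha}$ because $2\alpha\|a\|\,\|b\| \geq 0$) produces $e^{2\alpha\|a\|\,\|b\|} \geq \tfrac{1}{\alpha\delta}$, hence $\delta \geq \tfrac{1}{\alpha}e^{-2\alpha\|a\|\,\|b\|}$. This contradicts the assumed strict upper bound on $\delta$, so no such $x$ can exist, which is the claim.

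There is essentially no obstacle here: all the content is carried by \Cref{quantitative_norm}, and the corollary merely repackages its conclusion as a statement about how small a dominated perturbation $x$ of $e$ can be. The only points worth flagging are that the strict inequality $\|x\| < \delta$ is exactly what permits invoking the theorem with $\varepsilon = \delta$ without any limiting argument, and that — in contrast to \Cref{perturbationClosed} — closedness of the cone is used nowhere, which is precisely why this corollary is an improvement over the earlier one.
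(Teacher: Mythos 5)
Your proposal is correct and follows the paper's own argument: both apply \Cref{quantitative_norm} with $\varepsilon=\delta$ (valid since $\|x\|<\delta$) and derive a contradiction with the strict bound $\delta<\tfrac{1}{\alpha}e^{-2\alpha\|a\|\,\|b\|}$, the only cosmetic difference being that you exponentiate to bound $\delta$ while the paper notes directly that $\tfrac{1}{2\alpha}\ln\tfrac{1}{\alpha\delta}>\|a\|\cdot\|b\|$. No gaps.
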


\begin{proof}
Suppose there exists a positive real number $\delta<\tfrac{1}{\alpha}e^{-2\alpha\|a\|\,\|b\|}$ such that $[a,b]\geq e+x$
for some element $x \in \mathcal A$ with $\|x\|<\delta$.
Then \Cref{quantitative_norm} yields $$\|a\|\cdot \|b\|\geq \tfrac{1}{2\alpha} \ln \tfrac{1}{\alpha \delta}>\|a\|\cdot\|b\|$$ which is clearly impossible.
\end{proof}

The bound $\delta$ clearly depends on the choice of elements $a$ and $b$ and also on normality constant $\alpha$.
The application of the trace in the case of $n\times n$ matrices $M_n(\mathbb R)$ equipped with partial ordering defined entrywise will give a more precise lower bound for $\delta$. Namely, in \Cref{lower_bound_matrices} we will prove that whenever the inequality $[A,B]\geq I+X$ is solvable in $M_n(\mathbb R)$ then $\|X\| \geq 1$. \\



\section{Commutators (of positive operators) greater than a perturbation of the identity}

In view of \Cref{Popa_example} and \Cref{quantitative_norm} one can pose a question whether there exist positive operators $A$ and $B$ on the Hilbert lattice $\ell^2$ such that their commutator $[A,B]$  is greater than an arbitrarily small perturbation of the identity. In this section we find such operators. It turns out that this task is harder than one can imagine.

Let $(e_n)_{n\in \mathbb N}$ be the standard basis of the Hilbert lattice $\ell^2$. It should be clear that the bounded operator $U\colon \ell^2\to \ell^2$ which is defined on the standard basis vectors
as $Ue_n=e_{2n}$ ($n\in\mathbb N$) is a positive isometry. Similarly, the operator $V\colon \ell^2\to \ell^2$ which is defined on the standard basis vectors as $Ve_n=e_{2n-1}$ ($n\in\mathbb N$) is also a positive isometry.
Hence,  $U$ and $V$ can be realized as infinite matrices
\begin{equation}\label{UVmatrike}
U=\left(
\begin{array}{ccccccc}
0 & 0 & 0 & 0 & 0 & 0 & \hdots\\
1 & 0 & 0 & 0 & 0 & 0 & \hdots\\
0 & 0 & 0 & 0 & 0 & 0 & \hdots\\
0 & 1 & 0 & 0 & 0 & 0 & \hdots\\
0 & 0 & 0 & 0 & 0 & 0 & \hdots\\
0 & 0 & 1 & 0 & 0 & 0 & \hdots\\
\vdots & \vdots & \vdots & \vdots & \vdots & \vdots & \ddots
\end{array}\right) \qquad \textrm{and}\qquad
V=\left(
\begin{array}{ccccccc}
1 & 0 & 0 & 0 & 0 & 0 & \hdots\\
0 & 0 & 0 & 0 & 0 & 0 & \hdots\\
0 & 1 & 0 & 0 & 0 & 0 & \hdots\\
0 & 0 & 0 & 0 & 0 & 0 & \hdots\\
0 & 0 & 1 & 0 & 0 & 0 & \hdots\\
0 & 0 & 0 & 0 & 0 & 0 & \hdots\\
\vdots & \vdots & \vdots & \vdots & \vdots & \vdots & \ddots
\end{array}
\right)
\end{equation}
with respect to the standard basis $(e_n)_{n\in\mathbb N}$.
The operator $X\colon \ell^2\oplus \ell^2\to \ell^2$ is defined as the block operator matrix
$X=\left(\begin{array}{cc}
U & V
\end{array}\right)$.
Having in mind \eqref{UVmatrike}, we conclude that $X$ is a unitary operator. Hence, the identities $X^*X=I_{\ell^2\oplus \ell^2}$ and $XX^*=I_{\ell^2}$ yield
$$I_{\ell^2}=XX^*=\left(\begin{array}{cc}
U & V
\end{array}\right)\left(\begin{array}{c}
U^* \\
V^*
\end{array}\right)=UU^*+VV^*$$
and
$$\left(\begin{array}{cc}
I & 0\\
0 & I
\end{array}\right)=I_{\ell^2\oplus \ell^2}=X^*X=\left(\begin{array}{c}
U^* \\
V^*
\end{array}\right)\left(\begin{array}{cc}
U & V
\end{array}\right)=
\left(\begin{array}{cc}
U^*U & U^*V\\
V^*U & V^*V
\end{array}\right) , $$
that is, we have $U^*U = V^*V = I$ and $U^*V=V^*U=0$.

Let us define the operator $C \colon \ell^2\oplus \ell^2\to \ell^2\oplus \ell^2$ as
$$C=\left(\begin{array}{cc}
I & 0\\
0 & 0\\
\end{array}\right).$$

The following proposition helps us implicitly to find an example we are searching for.
Recall that operators of the form $[T^*, T] = T^* T - T T^*$ are called {\it self-commutators}, where $T$ is an operator on a Hilbert space.

\begin{proposition}
The operator $C$ can be written as a self-commutator of a positive isometry.
\end{proposition}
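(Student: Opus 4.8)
The plan is to build the isometry by hand out of $U$ and $V$. Note first that for any isometry $T$ on $\ell^2\oplus\ell^2$ one has $T^*T = I_{\ell^2\oplus\ell^2}$, so the self-commutator simplifies to $[T^*,T] = T^*T - TT^* = I_{\ell^2\oplus\ell^2} - TT^*$. Hence asking for a positive isometry $T$ with $[T^*,T] = C$ is the same as asking for a positive isometry whose range projection $TT^*$ equals $I_{\ell^2\oplus\ell^2} - C = \left(\begin{array}{cc} 0 & 0 \\ 0 & I\end{array}\right)$; equivalently, a positive isometry of $\ell^2\oplus\ell^2$ whose range is the second summand $\{0\}\oplus\ell^2$.

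Such a $T$ is now produced explicitly. First I would set $T := \left(\begin{array}{cc} 0 & 0 \\ U & V\end{array}\right)$ as a block operator matrix on $\ell^2\oplus\ell^2$. Since $U$ and $V$ are positive operators on $\ell^2$, and a block operator matrix acting on the Banach lattice $\ell^2\oplus\ell^2$ is positive exactly when each of its blocks is a positive operator, $T$ is a positive operator. Next, a direct block multiplication gives $T^*T = \left(\begin{array}{cc} U^*U & U^*V \\ V^*U & V^*V\end{array}\right)$, which, by the relations $U^*U = V^*V = I$ and $U^*V = V^*U = 0$ established above, equals $I_{\ell^2\oplus\ell^2}$; thus $T$ is a positive isometry. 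Finally, using $TT^* = \left(\begin{array}{cc} 0 & 0 \\ 0 & UU^* + VV^*\end{array}\right)$ together with $UU^* + VV^* = I$, we obtain $TT^* = \left(\begin{array}{cc} 0 & 0 \\ 0 & I\end{array}\right)$, and therefore $[T^*,T] = T^*T - TT^* = \left(\begin{array}{cc} I & 0 \\ 0 & 0\end{array}\right) = C$, as desired.

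I do not expect any genuine obstacle: once the form of $T$ is guessed, every remaining step is a short block-matrix computation that uses only the orthogonality relations among $U$ and $V$ recorded just before the proposition. The one point worth stating carefully is why $T\ge 0$, namely that positivity of a block operator matrix acting on the Banach lattice $\ell^2\oplus\ell^2$ amounts to positivity of each block; with this observation the argument is complete. (It is also worth noting in passing that the resulting $T$ is a proper isometry rather than a unitary, since $TT^*\ne I_{\ell^2\oplus\ell^2}$ — which is of course forced by $[T^*,T]=C\ne 0$.)
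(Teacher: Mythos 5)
Your proof is correct and rests on the same idea as the paper's: for an isometry $T$ the self-commutator is $T^*T-TT^*=I-TT^*$, so it suffices to exhibit a positive isometry whose range projection is $I-C$. The paper realizes this by identifying $\ell^2\oplus\ell^2$ with $\ell^2$ via a permutation similarity and taking the isometry to be $U$ itself, while you stay in the block picture and take $T=\left(\begin{smallmatrix} 0 & 0 \\ U & V \end{smallmatrix}\right)$, which avoids the permutation step but is otherwise the same argument.
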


\begin{proof}
Upon a permutation similarity we may assume that
$C$ is the infinite matrix
$$\left(
\begin{array}{cccccc}
1 & 0 & 0 & 0 & 0 & \hdots\\
0 & 0 & 0 & 0 & 0 & \hdots\\
0 & 0 & 1 & 0 & 0 & \hdots\\
0 & 0 & 0 & 0 & 0 & \hdots\\
0 & 0 & 0 & 0 & 1 & \hdots\\
\vdots & \vdots & \vdots & \vdots & \vdots & \ddots
\end{array}\right)$$
with respect to the standard basis $(e_n)_{n\in \mathbb N}$.
Since the positive isometry $U$ in (\ref{UVmatrike}) satisfies equalities $U^*U=I$ and $UU^*=I-C$, we obtain $[U^*,U]=C$,
completing the proof.
\end{proof}

As announced, we now show the order analog of Theorem \ref{Popa_example}.

\begin{theorem}\label{I+N}
There exist positive operators $A,B\colon \ell^2\to \ell^2$ such that $[A,B]=I+N$, where $N$ is a nilpotent operator of nil-index $3$. Furthermore,
if $\varepsilon \in (0,1)$, then $A$ and $B$ can be chosen in such a way that  $\|A\|=O(\varepsilon{^{-3}})$, $\|B\|=O(\varepsilon^{-3})$ and $\|N\|=O(\varepsilon)$.
\end{theorem}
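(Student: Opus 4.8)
The plan is to realize $A$ and $B$ as $3\times 3$ block operator matrices with respect to a decomposition of $\ell^2$ into three copies of itself, exploiting the Cuntz-type relations already established for $U$ and $V$. Note first that any candidate $N$ must be non-compact: if $N$ were compact, then $I+N=\lambda I+K$ with $\lambda=1\neq 0$ would fail the criterion of \Cref{BP} and could not be a commutator at all. So the construction cannot be "too small", and using the isometries $U,V$ in an essential way is natural.

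\emph{Step 1: an order-theoretic reduction.} Pick three positive isometries with pairwise orthogonal ranges summing to $\ell^2$, for instance $W_1=U^2$, $W_2=UV$, $W_3=V$; then $W_i^*W_j=\delta_{ij}I$, $\sum_i W_iW_i^*=I$, and all of $W_1,W_2,W_3,W_1^*,W_2^*,W_3^*$ have nonnegative matrix entries. Hence $Y:=\bigl(\begin{smallmatrix}W_1&W_2&W_3\end{smallmatrix}\bigr)\colon \ell^2\oplus\ell^2\oplus\ell^2\to\ell^2$ is unitary and conjugation $\Phi(M)=YMY^{*}$ is a $*$-isomorphism of $M_3(\mathcal B(\ell^2))$ onto $\mathcal B(\ell^2)$. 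Because the $W_i$ and $W_i^*$ have nonnegative entries, $\Phi$ carries block matrices with nonnegative-entry blocks to nonnegative-entry operators on $\ell^2$, and the inverse $\Phi^{-1}(T)=(W_i^*TW_j)_{i,j}$ does the reverse; since $\Phi,\Phi^{-1}$ are mutually inverse bijections, $\Phi$ maps the first cone \emph{onto} the second. It therefore suffices to produce $\widetilde A,\widetilde B\in M_3(\mathcal B(\ell^2))$ with nonnegative-entry blocks such that $[\widetilde A,\widetilde B]=I_3+\widetilde N$ with $\widetilde N$ strictly block-upper-triangular and $\widetilde N_{12}\widetilde N_{23}\neq 0$ (so $\widetilde N^3=0$, $\widetilde N^2\neq 0$); then $A=Y\widetilde A Y^{*}$, $B=Y\widetilde B Y^{*}$ are positive, $[A,B]=I+N$, and $N=Y\widetilde N Y^{*}$ is nilpotent of nil-index $3$.

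\emph{Step 2: the block construction.} The feature fighting against positivity is that each diagonal block of $[\widetilde A,\widetilde B]$ must be \emph{exactly} $I$: the entrywise cone on $\mathcal B(\ell^2)$ is monotone, hence normal, so by (the proof of) \Cref{solvabilityInequality} a diagonal block of a commutator of nonnegative operators can never dominate $I$, and the strict-upper-triangularity of $\widetilde N$ forces it to equal $I$ on the nose — the nonnegative contributions must cancel exactly. The machine for this cancellation is the Cuntz calculus $U^*U=V^*V=I$, $U^*V=V^*U=0$, $UU^*+VV^*=I$: it is exactly what makes self-commutators such as $[U^*,U]=I-UU^*=VV^*$ be projections and sums like $[U^*,U]+[V^*,V]$ collapse to $I$. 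Working with blocks that are scalar multiples of $U,V,U^*,V^*$ and $I$, I would choose the entries so that (a) every block is nonnegative, (b) each diagonal entry of $[\widetilde A,\widetilde B]$ telescopes to $I$, (c) every subdiagonal entry cancels to $0$, and (d) the $(1,2)$ and $(2,3)$ entries are nonzero with nonzero product. A cautionary computation in the $2\times 2$ case shows the danger: $\widetilde A=\bigl(\begin{smallmatrix}U^{*}&U^{*}\\V^{*}&V^{*}\end{smallmatrix}\bigr)$, $\widetilde B=\bigl(\begin{smallmatrix}U&V\\U&V\end{smallmatrix}\bigr)$ have $\widetilde A\widetilde B=2I_2$ and $\widetilde B\widetilde A=\bigl(\begin{smallmatrix}I&I\\I&I\end{smallmatrix}\bigr)$, so $[\widetilde A,\widetilde B]=\bigl(\begin{smallmatrix}I&-I\\-I&I\end{smallmatrix}\bigr)=I_2-J$ with $J^2=I_2$ — the "off-diagonal part" is a symmetry, not a nilpotent. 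The $3\times 3$ choice must therefore keep the telescoping mechanism that pins the diagonal to $I$ while deliberately breaking this symmetry so that only strictly-upper blocks survive.

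\emph{Step 3: the quantitative refinement, and the hard part.} For the asymptotics, reintroduce $\varepsilon$: multiply the entries of $\widetilde A$ and $\widetilde B$ by suitable powers of $\varepsilon$ and $\varepsilon^{-1}$, keeping the products that drive items (b)--(c) normalized to $1$, but arranging that the surviving superdiagonal blocks of the commutator pick up a factor $\varepsilon$; tracking the largest scalar through the $3\times 3$ pattern should yield $\|A\|=\|\widetilde A\|=O(\varepsilon^{-3})$, $\|B\|=\|\widetilde B\|=O(\varepsilon^{-3})$, $\|N\|=\|\widetilde N\|=O(\varepsilon)$, with $[A,B]=I+N$ exactly and $N^2\neq 0=N^3$ for small $\varepsilon$. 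The main obstacle is Step 2: exhibiting explicit nonnegative blocks that simultaneously (i) force the diagonal of $[\widetilde A,\widetilde B]$ to be exactly $I$, (ii) annihilate the subdiagonal, and (iii) leave a genuinely index-$3$ nilpotent strictly above the diagonal — the $2\times 2$ computation illustrates how easily one instead produces $I-J$ with $J^2=I$ — and then pushing the $\varepsilon$-scaling of Step 3 through this rigid pattern without destroying nonnegativity or the diagonal normalization.
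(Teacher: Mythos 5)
Your proposal sets up a sensible framework, but it stops exactly where the theorem's content lies: no operators $A$ and $B$ are ever exhibited. Steps 1 and 3 are reductions and bookkeeping (pass to block operator matrices with entrywise-nonnegative blocks, then rescale by a positive block-diagonal conjugation to trade norm growth for a small $N$), and these match the paper's strategy in spirit. But Step 2 — choosing explicit nonnegative blocks built from $U,V,U^*,V^*,I$ so that every diagonal block of $[\widetilde A,\widetilde B]$ is exactly $I$, every lower block cancels to $0$, and the surviving strictly upper part is nilpotent of nil-index exactly $3$ — is precisely the construction the theorem asserts to exist, and you explicitly defer it ("I would choose the entries so that\dots", "the main obstacle is Step 2"). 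Since this is an existence theorem proved by an explicit example, a proof without that example has a genuine gap, not a routine one: your own $2\times2$ computation shows the cancellation pattern can easily produce a symmetry rather than a nilpotent, so there is no reason offered that the constraints (a)--(d) are simultaneously satisfiable, nor that the $\varepsilon$-scaling can be pushed through a (hypothetical) pattern to give the exponent $-3$.

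Two further cautions. First, the paper's construction uses a $4\times 4$ block pattern (with blocks among $0, I, U, V, U^*, V^*$ and $W=UV^*+VU^*$, and scalar weights $2,3,4$), and its nilpotent $N$ is strictly upper triangular with a \emph{zero} $(1,2)$ block; it is not clear that your insistence on a $3\times 3$ pattern with $\widetilde N_{12}\widetilde N_{23}\neq 0$ is achievable under the positivity constraints, so committing to $3\times 3$ may add an obstacle rather than remove one. Second, your heuristic that "a diagonal block of a commutator of nonnegative operators can never dominate $I$ by \Cref{solvabilityInequality}" is not justified as stated: a diagonal block of $[\widetilde A,\widetilde B]$ is a sum $\sum_k(\widetilde A_{ik}\widetilde B_{ki}-\widetilde B_{ik}\widetilde A_{ki})$, which is not itself a commutator, so the corollary does not apply to it. This remark is only motivational in your write-up, but it should not be leaned on. To complete the argument along the paper's lines you would need to produce the explicit blocks (the paper's choice, after conjugation by $S_\varepsilon=\diag(\varepsilon^3 I,\varepsilon^2 I,\varepsilon I, I)$, gives $\|A\|,\|B\|=O(\varepsilon^{-3})$ and $\|N\|=O(\varepsilon)$ with $N^2\neq 0=N^3$) and verify the commutator identity by direct computation using $U^*U=V^*V=I$, $U^*V=V^*U=0$, $UU^*+VV^*=I$.
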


In view of \Cref{Wielandt-type} it is not possible that the nilpotent operator $N$ in Theorem \ref{I+N} is positive.

\begin{proof}
Let $W:=UV^*+VU^*$. By a direct calculation one can show that $W$ satisfies $We_{2n}=e_{2n-1}$ and $We_{2n-1}=e_{2n}$ for each $n\in \mathbb N$. Therefore,
$$W=
\left(
\begin{array}{ccccc}
0 & 1 & 0 & 0 & \hdots\\
1 & 0 & 0 & 0 & \hdots\\
0 & 0 & 0 & 1 & \hdots\\
0 & 0 & 1 & 0 & \hdots\\
\vdots & \vdots & \vdots & \vdots & \ddots
\end{array}\right).$$
Now we define $4\times 4$ block-operator matrices
$$A=\left(\begin{array}{cccc}
0 & V^* & 0 & 3I\\
0 & U^* & I & 0\\
V^* & 0 & U^* & 2W\\
U^* & 0 & V^* & 0
\end{array}\right) \qquad \textrm{and} \qquad
B=\left(\begin{array}{cccc}
0 & 0 & 2V & 2U\\
0 & 0 & 0 & 0\\
0 & I & 2U & 2V\\
I & 0 & 0 & 0
\end{array}\right) $$
that define positive operators on $\ell^2 \cong \ell^2  \oplus \ell^2 \oplus \ell^2 \oplus \ell^2$.
Using the facts that $U$ and $V$ are isometries, and $U^*V=V^*U=0$ we obtain
$$AB=\left(
\begin{array}{cccc}
3I & 0 & 0 & 0 \\
0 & I & 2U & 2V\\
2W & U^* & 4I & 0\\
0 & V^* & 0 & 4I \\
\end{array}\right).$$
Similarly, applying the identities $W=UV^*+VU^*$ and $I=UU^*+VV^*$ we obtain
$$
BA=\left(
\begin{array}{cccc}
2I & 0 & 2W & 4VW \\
0 & 0 & 0 & 0\\
2W & U^* & 3I & 4UW\\
0 & V^* & 0 & 3I
\end{array}\right).$$
A direct calculation now yields
$$[A,B]=\left(\begin{array}{cccc}
I & 0 & -2W & -4VW\\
0 & I & 2U & 2V\\
0 & 0 & I & -4UW\\
0 & 0 & 0 & I
\end{array}\right).$$
To conclude the first part of the proof, observe that $[A,B]$ can be written in the form
$I+N$, where
$$N=\left(\begin{array}{cccc}
0 & 0 & -2W & -4VW\\
0 & 0 & 2U & 2V\\
0 & 0 & 0 & -4UW\\
0 & 0 & 0 & 0
\end{array}\right)$$
is nilpotent. Its nil-index is $3$, since
$$N^2=\left(\begin{array}{cccc}
0 & 0 & 0 &  8WUW \\
0 & 0 & 0 &  -8U^2 W \\
0 & 0 & 0 & 0 \\
0 & 0 & 0 & 0
\end{array}\right) \neq 0 $$
(as $U$ and $W$ are isometries), and $N^3 = 0$.

For the ``furthermore" statement, let us redefine the operators $A, B$ and $N$ as
$\widetilde A=S_\varepsilon AS_\varepsilon^{-1}$, $\widetilde B=S_\varepsilon BS_\varepsilon^{-1}$ and $\widetilde N=S_\varepsilon N S_\varepsilon^{-1}$, where $S_\varepsilon=\diag(\varepsilon^3 I, \varepsilon^2 I, \varepsilon I, I)$ is the block-diagonal operator matrix. From $[A,B]=I+N$ we conclude
$$ [\widetilde A,\widetilde B] =S_\varepsilon ABS_\varepsilon^{-1}-S_\varepsilon BAS_\varepsilon^{-1}  =
S_\varepsilon[A,B]S_\varepsilon^{-1}=S_\varepsilon(I+N)S_\varepsilon^{-1}=I+\widetilde N. $$
A direct calculation shows
$$\widetilde A=
\left(\begin{array}{cccc}
0 & \varepsilon V^* & 0 & 3\varepsilon^3 I\\
0 & U^* & \varepsilon I & 0\\
\frac{1}{\varepsilon^2}V^* & 0 & U^* & 2\varepsilon W\\
\frac{1}{\varepsilon^3}U^* & 0 & \frac{1}{\varepsilon} V^* & 0
\end{array}\right)$$
which yields $\|\widetilde A\|=O(\varepsilon^{-3})$. Similarly, one can show $\|\widetilde B\|=O(\varepsilon^{-3})$.
To finish the proof, note that the equality
$$\widetilde N=\left(\begin{array}{cccc}
0 & 0 & -2\varepsilon^2 W & -4\varepsilon^3 VW\\
0 & 0 & 2\varepsilon U & 2\varepsilon^2 V \\
0 & 0 & 0 & -4\varepsilon UW \\
0 & 0 & 0 & 0
\end{array}\right)$$
yields $\|\widetilde N\|=O(\varepsilon)$.
\end{proof}

\section{The finite-dimensional case}

In this section we study the above results in the finite-dimensional setting. The situation here is quite different, as the notion of the trace restricts drastically which operators are commutators.
By \cite{Shoda, Albert:57}, an $n\times n$ matrix over an arbitrary field is a commutator if and only if its trace is zero.
In \cite[Corollary 3.2]{DK19} the authors proved the following order analog of this result which characterizes positive matrices as commutators of positive matrices.

\begin{proposition}
A positive matrix $C$ can be written as a commutator of positive matrices $A$ and $B$ if and only if $C$ is nilpotent.
Moreover, we can choose $A$ to be diagonal and $B$ to be permutation similar to a strictly upper-triangular matrix.
\label{positive_matrices}
\end{proposition}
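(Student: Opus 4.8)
The plan is to prove the two implications separately; the direction ``$C$ nilpotent $\Rightarrow$ $C$ is such a commutator'' is the constructive one, while the converse is where a genuine positivity input is needed. For \emph{necessity}, suppose $C=AB-BA$ with $A,B\ge 0$ in $M_n(\mathbb R)$. I would view $M_n(\mathbb R)$ as the algebra of all operators on the Banach lattice $\mathbb R^n$ with its coordinatewise order; then $A$ and $B$ are positive operators, and every matrix is a finite-rank, hence compact, operator. The result recalled in the introduction (\cite{Drnovsek:11, Gao:14}) therefore applies directly and shows that $C$ is quasinilpotent, which in finite dimensions is the same as being nilpotent. This is the only step that uses more than linear algebra: the weaker fact $\tr C=0$ (valid for any commutator) does not suffice, since the $2\times 2$ nonnegative matrix with zero diagonal and off-diagonal entries equal to $1$ has zero trace but is not nilpotent, so one must exploit positivity of \emph{both} $A$ and $B$.

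For \emph{sufficiency}, assume now that $C\ge 0$ is nilpotent. I would first invoke the classical characterization that a nonnegative matrix is nilpotent if and only if the digraph on $\{1,\dots,n\}$ having an arc $i\to j$ whenever $c_{ij}>0$ is acyclic, equivalently if and only if $C$ is permutation similar to a strictly upper triangular matrix. Conjugation by a permutation matrix $P$ is a lattice automorphism of $M_n(\mathbb R)$ that carries diagonal matrices to diagonal matrices and strictly upper triangular matrices to matrices permutation similar to strictly upper triangular ones. Hence it is enough to realize $C$ as the required commutator under the extra assumption that $C=(c_{ij})$ is itself strictly upper triangular with $c_{ij}\ge 0$, and then to transport the result back through $P$.

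Under that assumption I would simply set $A=\diag(a_1,\dots,a_n)$ with $a_1>a_2>\cdots>a_n\ge 0$, for instance $a_i=n-i$, and define $B=(b_{ij})$ by $b_{ij}=c_{ij}/(a_i-a_j)$ for $i<j$ and $b_{ij}=0$ otherwise. Then $A$ is a nonnegative diagonal matrix, $B$ is a nonnegative strictly upper triangular matrix, and from $(AB-BA)_{ij}=(a_i-a_j)b_{ij}$ one reads off $(AB-BA)_{ij}=c_{ij}$ for $i<j$ and $(AB-BA)_{ij}=0=c_{ij}$ on and below the diagonal; thus $AB-BA=C$. Undoing the permutation similarity replaces $A$ by another diagonal matrix and $B$ by a matrix permutation similar to a strictly upper triangular one, both still positive, which is exactly the ``moreover'' claim.

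As for the main obstacle: with the cited quasinilpotence theorem available there is essentially none, since the sufficiency direction is elementary and the reduction step is routine. The substantive content sits entirely in the necessity direction, so a proof avoiding the Banach-lattice machinery would have to supply a direct (finite-dimensional) argument that a positive commutator of two positive matrices is nilpotent — and, as the $2\times 2$ example shows, such an argument must use positivity of both factors, not merely the vanishing of the trace.
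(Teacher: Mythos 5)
Your proof is correct. Note, though, that the paper itself gives no argument for this proposition: it is quoted from \cite[Corollary 3.2]{DK19}, so there is no in-paper proof to match against. Your sufficiency construction (reduce by permutation similarity to a strictly triangular $C$, take $A$ diagonal with strictly monotone nonnegative diagonal, and set $b_{ij}=c_{ij}/(a_i-a_j)$ off one side of the diagonal) is exactly the device the paper does use one proposition later for the quantitative refinement, where $B A \le \varepsilon C$ is additionally arranged by choosing the diagonal entries geometrically; so that half is essentially the standard route, and your checks (acyclicity of the digraph of a nonnegative nilpotent matrix, positivity of $B$, preservation of the ``diagonal plus permuted strictly triangular'' form under conjugation by a permutation) are all in order. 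For necessity you import the infinite-dimensional theorem of \cite{Drnovsek:11,Gao:14} (a positive commutator of positive operators, one of them compact, is quasinilpotent), which is legitimate since every matrix is compact and quasinilpotent equals nilpotent in finite dimensions; this is a heavier hammer than strictly needed for matrices, where \cite{DK19} argues directly, but it is a valid and clean reduction given that the paper itself states that theorem in the introduction. Your observation that trace zero alone cannot replace this step (e.g.\ the $2\times 2$ symmetric permutation-type matrix with zero diagonal) is also on point and is exactly what separates this proposition from the later one requiring only one of the two factors to be positive.
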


Motivated with \cite{JOS}, we now prove the following quantitative version of \Cref{positive_matrices}.

\begin{proposition}
Let $C$ be a positive nilpotent matrix. Then for each $\varepsilon > 0$ there exist a positive diagonal matrix $A$ and a positive matrix $B$  such that $C = AB - BA$ and $B A \leq \varepsilon C$.
\end{proposition}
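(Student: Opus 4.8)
The plan is to first reduce to the case when $C$ is strictly upper triangular, and then exhibit $A$ and $B$ explicitly, taking the diagonal of $A$ to be a geometric progression whose common ratio is calibrated to $\varepsilon$.

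\emph{Reduction.} As in the proof of \Cref{positive_matrices}, a nonnegative nilpotent matrix is permutation similar to a strictly upper triangular one (its digraph has no directed cycle, hence admits a topological ordering). Conjugation by a permutation matrix sends diagonal matrices to diagonal matrices, positive matrices to positive matrices, and preserves the entrywise order; so if $\widetilde A,\widetilde B$ work for $\widetilde C=P^{T}CP$, then $P\widetilde AP^{T},P\widetilde BP^{T}$ work for $C$. Hence we may assume $C=(c_{ij})$ is strictly upper triangular with all $c_{ij}\ge 0$.

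\emph{Construction and the identity $AB-BA=C$.} Set $\rho=\varepsilon/(1+\varepsilon)\in(0,1)$, let $d_i=\rho^{\,i}$ for $i=1,\dots,n$, and put $A=\diag(d_1,\dots,d_n)$, a positive diagonal matrix with strictly decreasing diagonal. Define $B=(b_{ij})$ by $b_{ij}=c_{ij}/(d_i-d_j)$ for $i<j$ and $b_{ij}=0$ otherwise; since $d_i>d_j$ whenever $i<j$ and $c_{ij}\ge 0$, the matrix $B$ is positive (and strictly upper triangular). Because $A$ is diagonal, $(AB-BA)_{ij}=(d_i-d_j)\,b_{ij}$, which equals $c_{ij}$ for $i\ne j$ and $0=c_{ii}$ on the diagonal, so $AB-BA=C$.

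\emph{The estimate $0\le BA\le\varepsilon C$.} Using once more that $A$ is diagonal, $(BA)_{ij}=b_{ij}d_j=c_{ij}\,d_j/(d_i-d_j)$ for $i<j$ and $0$ otherwise, so $BA\ge 0$, and it suffices to check that $d_j/(d_i-d_j)\le\varepsilon$ for all $i<j$. Writing $t=\rho^{\,j-i}\in(0,\rho]$, this ratio equals $t/(1-t)$, which is increasing in $t$ and therefore at most $\rho/(1-\rho)=\varepsilon$. Multiplying through by $c_{ij}\ge 0$ gives $(BA)_{ij}\le\varepsilon c_{ij}$ entrywise, i.e. $BA\le\varepsilon C$, which finishes the proof; note that $B$ is even strictly upper triangular (up to the permutation from the reduction), matching the refinement in \Cref{positive_matrices}.

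I do not anticipate a genuine obstacle here. The only point needing care is the choice of common ratio $\rho=\varepsilon/(1+\varepsilon)$ (any $\rho$ with $0<\rho\le\varepsilon/(1+\varepsilon)$ works, forcing the decisive inequality $d_j/(d_i-d_j)\le\varepsilon$), together with the routine but essential observation that the permutation reduction preserves both diagonality of $A$ and the order relation $BA\le\varepsilon C$.
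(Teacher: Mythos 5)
Your proposal is correct and is essentially the paper's own argument: the paper likewise reduces to a triangular form (strictly lower rather than upper), takes $A$ diagonal with a geometric progression of ratio $\tfrac{1+\varepsilon}{\varepsilon}$ (the reciprocal of your $\rho$), defines $b_{ij}=c_{ij}/(a_{ii}-a_{jj})$ on the support of $C$, and proves $BA\le\varepsilon C$ by the same ratio estimate. Your version merely spells out the permutation reduction and uses a decreasing rather than increasing diagonal, which is an immaterial relabeling.
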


\begin{proof}
We may assume that $C = (c_{i j})_{i,j=1}^n$ is a strictly lower-triangular matrix.
Define $A$ to be the diagonal matrix with diagonal entries
$a_{k k} = \left( \frac{1 + \varepsilon}{\varepsilon} \right)^{k-1}$ for $k=1, 2, 3, \ldots, n$.
The positive matrix $B$ is defined as
$$b_{ij}:=\left\{
\begin{array}{ccc}
\frac{c_{ij}}{a_ {i i}-a_{j j}} &:& i > j\\
0 & : & i \leq j
\end{array}.\right.$$
A direct calculation shows that $C=AB-BA$. To prove the inequality $B A \leq \varepsilon C$, we use the following estimate
$$ \frac{a_{j j}}{a_{i i} - a_{j j}} \leq \frac{a_{j j}}{a_{j+1,j+1} - a_{j j}} = \frac{1}{\frac{1 + \varepsilon}{\varepsilon} - 1} = \varepsilon $$
that holds for $i > j$.
\end{proof}

We continue this section with the following result that characterizes which positive matrices are commutators of two matrices with one them being positive.

\begin{proposition}
A positive $n \times n$ matrix $C$ is a commutator of $A$ and $B$ with $A\geq 0$ if and only if $\tr(C)=0$.
\end{proposition}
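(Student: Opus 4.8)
The plan is to establish the two implications separately; the forward one is immediate, and the converse reduces, via the positivity of $C$, to the classical Shoda-type construction of a commutator with prescribed off-diagonal part.

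For the direct implication I would simply note that positivity of $A$ is irrelevant: if $C = AB - BA$ for any square matrices $A$ and $B$, then $\tr(C) = \tr(AB) - \tr(BA) = 0$ because the trace is linear and $\tr(AB) = \tr(BA)$.

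For the converse, suppose $C = (c_{ij})_{i,j=1}^n \geq 0$ (entrywise) and $\tr(C) = 0$. The first — and only genuinely substantive — step is the observation that $\tr(C) = \sum_{i=1}^n c_{ii}$ is a sum of nonnegative reals, so it vanishes precisely when $c_{ii} = 0$ for every $i$; hence $C$ has zero diagonal. Once this is in place I would invoke the standard construction: put $A = \diag(1, 2, \ldots, n)$, which is a positive matrix whose diagonal entries are pairwise distinct, and define $B = (b_{ij})$ by $b_{ij} = \frac{c_{ij}}{i-j}$ for $i \neq j$ and $b_{ii} = 0$. A routine computation (using that $A$ is diagonal) gives $(AB - BA)_{ij} = (i-j)\, b_{ij} = c_{ij}$ for $i \neq j$ and $(AB - BA)_{ii} = 0 = c_{ii}$, so $C = AB - BA$ with $A \geq 0$, as desired. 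One could equally well take the diagonal entries $a_{kk} = \bigl(\frac{1+\varepsilon}{\varepsilon}\bigr)^{k-1}$ as in the preceding proposition, but nothing quantitative is needed here.

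I do not expect a real obstacle. The contrast with \Cref{positive_matrices} is instructive and worth flagging in the write-up: there both $A$ and $B$ must be positive, which forces $C$ to be nilpotent, whereas here only $A$ is constrained, and the diagonal matrix above is positive irrespective of the signs that appear in $B$. The single point that must be made carefully is the passage from the trace condition to the zero-diagonal condition, which is exactly where the hypothesis $C \geq 0$ enters; the remainder is the well-known solvability of $(a_{ii} - a_{jj})\, b_{ij} = c_{ij}$ for a diagonal $A$ with distinct diagonal entries.
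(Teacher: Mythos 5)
Your proposal is correct and follows essentially the same route as the paper: the trace identity handles the forward direction, positivity forces the zero diagonal, and then $A=\diag(1,2,\ldots,n)$ with $b_{ij}=c_{ij}/(i-j)$ gives $(AB-BA)_{ij}=(i-j)b_{ij}=c_{ij}$, exactly the construction used in the paper's proof. No gaps.
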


\begin{proof}
Since every commutator has trace zero, it only suffices to prove the ``if" statement.
To this end, note first that $C\geq 0$ and $0=\tr (C)=\sum_{k=1}^n c_{kk}$ yield $c_{11}=\cdots=c_{nn}=0$.
Define $A$ to be the diagonal matrix with diagonal entries $a_{kk}=k$ for each $k=1,\ldots,n$. In particular, $A$ is a positive matrix. The matrix $B$ is defined as
$$b_{ij}:=\left\{
\begin{array}{ccc}
\frac{c_{ij}}{i-j} &:& i\neq j\\
0 & : & i=j
\end{array}.\right.$$
A direct calculation shows
that $C=AB-BA$.
\end{proof}

We conclude this paper with the following quantitative version of Popa's result for $n\times n$ matrices equipped with the partial ordering defined entrywise.
By $r(T) $ we denote the spectral radius of a matrix $T$.

\begin{proposition}\label{lower_bound_matrices}
Let $A$ and $B$ be real $n\times n$ matrices. If there exists a real $n\times n$ matrix $X$ such that
$$[A,B]\geq I-X,$$ then the following assertions hold:
\begin{enumerate}
\item $\tr(X)\geq n$;
\item $r(X)\geq 1$, and so $\| X \| \geq 1$;
\item if $X$ is an idempotent, then $X=I$.
\end{enumerate}
\end{proposition}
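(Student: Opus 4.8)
The plan is to derive everything from two elementary observations: every commutator $[A,B]$ has trace zero, and a real matrix all of whose entries are nonnegative has nonnegative diagonal entries and hence nonnegative trace. First I would rewrite the hypothesis as $[A,B]-I+X\ge 0$ in the entrywise order. Since the diagonal of $[A,B]-I+X$ consists of nonnegative numbers, summing them gives $\tr([A,B])-n+\tr(X)\ge 0$, and because $\tr([A,B])=0$ this is exactly assertion (i): $\tr(X)\ge n$.

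For (ii) I would pass to the spectrum of $X$. Let $\lambda_1,\dots,\lambda_n$ be its eigenvalues listed with algebraic multiplicity, so that $\sum_{k=1}^n\lambda_k=\tr(X)$. Combining (i) with the triangle inequality, $n\le\tr(X)=\left|\sum_{k=1}^n\lambda_k\right|\le\sum_{k=1}^n|\lambda_k|\le n\, r(X)$, whence $r(X)\ge 1$. Since $r(X)\le\|X\|$ for every submultiplicative matrix norm, and in particular for the operator norm, this also yields $\|X\|\ge 1$.

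For (iii) I would assume in addition that $X^2=X$. Then $X$ is annihilated by $t(t-1)$, so its minimal polynomial has simple roots contained in $\{0,1\}$; hence $X$ is diagonalizable with spectrum contained in $\{0,1\}$, and $\tr(X)$ equals the algebraic multiplicity of the eigenvalue $1$, i.e. $\tr(X)=\operatorname{rank}(X)\le n$. Together with (i) this forces $\tr(X)=n$, so $\operatorname{rank}(X)=n$; thus $X$ is invertible, and multiplying $X^2=X$ on the left by $X^{-1}$ gives $X=I$.

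I do not expect a genuine obstacle: the whole argument hinges on the vanishing of the trace of a commutator together with the entrywise positivity of $[A,B]-I+X$, and parts (ii) and (iii) are then purely spectral consequences of the single scalar inequality $\tr(X)\ge n$. It is worth noting that, in contrast to the earlier results in the paper, no positivity of $A$ or $B$ is used; the finiteness of the dimension is essential only through the existence and additivity of the trace.
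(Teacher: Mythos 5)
Your proof is correct and follows essentially the same route as the paper: assertion (i) from the vanishing trace of a commutator together with the entrywise nonnegativity of $[A,B]-I+X$, assertion (ii) from comparing $\tr(X)$ with $n\,r(X)$ via the eigenvalues, and assertion (iii) from the spectral structure of an idempotent. The only cosmetic differences are that you argue (ii) directly rather than by contradiction and phrase (iii) via rank and invertibility instead of the multiplicity of the eigenvalue $1$.
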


\begin{proof}
(i) Applying the trace on both sides of the inequality, we obtain
$$ 0=\tr([A,B])\geq \tr(I-X)=n-\tr (X) $$
which yields $\tr(X)\geq n$.

(ii) Suppose that $r(X)<1$. Since $\tr (X)$ is equal to the sum of all eigenvalues $\lambda_1,\ldots,\lambda_n$ of $X$ counted according to their algebraic multiplicities, we have
$$|\tr(X)|=|\lambda_1+\cdots+\lambda_n|\leq |\lambda_1|+\cdots+|\lambda_n| \leq n r(X)<n.$$
This gives us $\tr(X)<n$ which contradicts (i).

(iii) If $X$ is an idempotent, then its eigenvalues are $0$ and $1$. If $X\neq I$, then at least one eigenvalue of $X$ is zero. If $k<n$ is the algebraic multiplicity of the eigenvalue $1$, we have
$\tr(X)=k<n$. Again, this contradicts (i). Hence, $X=I$.
\end{proof}

It should be noted that \Cref{I+N} shows that there is no reasonable infinite-dimensional version of \Cref{lower_bound_matrices}.

\subsection*{Acknowledgements}
The first author is supported by the Slovenian Research and Innovation Agency program P1-0222.
The second author is supported by the Slovenian Research and Innovation Agency program P1-0222 and grant N1-0217.



\end{document}